\newtheorem{thm}{Theorem}
\newtheorem{defn}{Definition}
\numberwithin{defn}{section}
\numberwithin{thm}{section}
\numberwithin{Lemma}{section}
\numberwithin{Corollary}{section}
\numberwithin{Example}{section}
\numberwithin{subsection}{section}
\numberwithin{Remark}{section}
\numberwithin{equation}{section}
\numberwithin{ppn}{section}
\begin{document}
\title[ A class of optimal eighth-order Steffensen-type ... ]
{ A class of optimal eighth-order Steffensen-type  iterative methods for solving nonlinear equations and their basins of attraction} 
\author{Anuradha Singh and J. P. Jaiswal }
\date{}
\maketitle


\textbf{Abstract.} 
This article concerned with the issue of solving a nonlinear equation with the help of iterative method where no any derivative evaluation is required per iteration. Therefore, this work contributes to a new class of optimal eighth-order Steffensen-type methods. Theoretical proof has been given to reveal the eighth-order convergence. Numerical comparisons have been carried out to show the effectiveness of contributed scheme.
\\\\
\textbf{Mathematics Subject Classification (2010).} 65H05, 41A25.
\\\\
\textbf{Keywords and Phrases.} Nonlinear equation, order of convergence, Steffensen-type method, efficiency index.										
\section{Introduction}
During the recent past,  a wide collection of iterative methods has been presented  in many journals, one can  see [\cite{Soleymani1}-\cite{Soleymani11}] and the references therein. In order to find the solution of a nonlinear equation Newton has provided the following iterative formula
\begin{equation}
x_{n+1}=x_n- \frac{f(x_n)}{f'(x_n)}.
\end{equation}
Steffensen \cite{steffensen} was the first who furnished the derivative-free form of Newton's scheme given by :
\begin{equation}
x_{n+1}=x_n-\frac{f(x_n)}{f[x_n,w_n]}, \  \   \   \  w_n=x_n+f(x_n), \ \  n=0,1,2...
\end{equation}
both schemes possess the quadratic rate of convergence and same efficiency index 1.414. Although both the methods have the same order of convergence and efficiency index,  but Steffensen method is derivative free.  In order to increase the rate of convergence and efficiency index of iterative methods the number of function evaluations may increase. Kung and Traub \cite{Kung} conjectured that a multipoint iteration without memory consuming $n$ evaluation per full iteration can reach the maximum convergence rate $2^{n-1}$. A large collection of research papers is available on the higher-order iterative methods agree with the Kung-Traub conjecture. In order to compare different iterative methods of same order the classical efficiency index of an iterative process in \cite{Traub1} given by $p^{\frac{1}{n}}$, where $p$ is the rate of convergence and $n$ is the total number of functional evaluations per iteration. More recently, many researchers have focused to make existing iterative methods free from derivatives, interested researcher can follow \cite{Soleymani3}-\cite{Soleymani6} . In many of the science and engineering problem, the evaluation of derivative is difficult and time consuming. Therefore, the Steffensen-type methods have become very popular in terms of solving nonlinear equations.
This study is summarized as follows: Firstly, we provide a brief review of available literature to reveal the development of different  derivative-free iterative methods. In the next section, we design a new optimal eight-order Steffensen-type iterative method for finding simple roots of nonlinear equations .  In section 4, we employ some numerical examples to compare the performance of the new method  with some existing eight-order derivative-free methods. Section 5, reveals the  graphical comparison by basins of attraction. Finally, in the last section  brief conclusion  will be given.

\section{A brief review of the available literature}

In this section,  we give an overview of some recent derivative free methods.
 Soleymani et al. \cite{Soleymani7}  have improved the efficiency index of  following method in terms of making it derivative-free 
\begin{eqnarray*}
y_n&=&x_n- \frac{f(x_n)}{f[x_n,w_n]} , \  \  w_n=x_n+ \beta f(x_n), \   \beta \in \mathbb{R} \setminus \{ 0\}, \nonumber\\
z_n&=&y_n - \frac{f(y_n)f(w_n)}{(f(w_n)-f(y_n))f[x_n,y_n]}, \nonumber\\
x_{n+1}&=& z_n -\frac{f(z_n)}{f'(z_n)}.
\end{eqnarray*}
This method has the eighth-order convergence and $1.516$ as its efficiency index. To improve its efficiency index authors have established  two optimal three-step multipoint derivative-free methods given by  
\begin{eqnarray*}
y_n&=&x_n -\frac{f(x_n)}{f[x_n,w_n]},     w_n=x_n+ \beta f(x_n), \nonumber\\
 z_n&=&y_n-\frac{f(y_n)f(w_n)}{(f(w_n)-f(y_n))f[x_n,y_n]}, \nonumber\\
x_{n+1}&=&z_n- \frac{f(z_n)f(w_n)}{(f(w_n)-f(y_n))f[x_n,y_n]} \{G(\varphi) \times H(\tau) \times Q(\sigma) \times L(\rho)\}, 
\end{eqnarray*}
and 
\begin{eqnarray*}
y_n&=&x_n -\frac{f(x_n)}{f[x_n,w_n]},     w_n=x_n- \beta f(x_n), \nonumber\\
 z_n&=&y_n-\frac{f(y_n)f(w_n)}{(f(w_n)-f(y_n))f[x_n,y_n]}, \nonumber\\
x_{n+1}&=&z_n- \frac{f(z_n)f(w_n)}{(f(w_n)-f(y_n))f[x_n,y_n]} \{G(\varphi) \times H(\tau) \times Q(\sigma) \times L(\rho)\}, 
\end{eqnarray*}
where $\beta \in \mathbb{R} \setminus \{0\}$, $\varphi= \frac{f(z)}{f(y)}$ , $\tau=\frac{f(z)}{f(w)}$, $\sigma=\frac{f(z)}{f(x)}$, $\rho=\frac{f(y)}{f(w)}$. These methods have eighth-order convergence with efficiency index $1.682$ under some conditions on the weight functions given in the same paper.\\
In \cite{Soleymani2}, Soleymani has accelerated the efficiency index of the following eighth-order multipoint structure
 \begin{equation*}
y_n=x_n- \frac{f(x_n)}{f[x_n,w_n]}, \ \  z_n=y_n- \frac{f(y_n)}{f'(y_n)}, \ \  x_{n+1}=z_n-\frac{f(z_n)}{f'(z_n)}.
\end{equation*}
He has approximated the derivatives by replacing $f'(y_n) \approx f[x_n, w_n]$, $f'(z_n) \approx f[x_n,w_n]$ and used the concept of weight functions to make it optimal as well as derivative-free. He proposed the following iterative formula
\begin{eqnarray*}
y_n&=&x_n-\frac{f(x_n)}{f[x_n,w_n]}, \  \   w_n=x_n+ \beta f(x_n), \nonumber\\
z_n&=& y_n- \frac{f(y_n)}{f[x_n,w_n]}[G(A) \times H(B)], \nonumber\\ 
x_{n+1}&=& z_n -\frac{f(z_n)}{f[x_n,w_n]} [K(\Gamma) \times L(\Delta) \times P(E) \times Q(B) \times J(A)],
\end{eqnarray*}
wherein $\beta \in \mathbb{R}\setminus \{0\} $, $A=\frac{f(y)}{f(x)}$, $B=\frac{f(y)}{f(w)}$, $\Gamma=\frac{f(z)}{f(x)}$, $\Delta=\frac{f(z)}{f(w)}$, $E=\frac{f(z)}{f(y)}$. This method has the eighth-order convergence and efficiency index $1.682$ under some conditions on weight functions.
Inspired from all these papers we also improve the order of convergence as well as the efficiency index of one existing seventh-order   method in the next section.
\section{Main method and convergence analysis}
First we give some definitions which we will use later.
\begin{defn}
Let f(x) be a real valued function with a simple root $\alpha$ and let ${x_n}$ be a sequence of real numbers that converge towards $\alpha$. The order of convergence m is given by
\begin{equation}\label{eqn:21}
\lim_{n\rightarrow\infty}\frac{x_{n+1}-\alpha}{(x_n-\alpha)^m}=\zeta\neq0,  
\end{equation}     
\noindent
where $\zeta$ is the asymptotic error constant and $m \in R^+$.\\
\end{defn}

\begin{defn}
Let $n$ be the number of function evaluations of the new method. The efficiency of the new method is measured by the concept  of efficiency index \cite{Gautschi,Traub1} and defined as
\begin{equation}\label{eqn:23}
m^{1/n},
\end{equation}
where $m$ is the order of convergence of the new method.\\
\end{defn}
Consider the following seventh-order method established by Soleymani et al. \cite{Soleymani1} to build a new eighth-order  method:
\begin{eqnarray}\label{eqn:11}
y_n&=&x_n-  \frac{f(x_n)}{f'(x_n)}\nonumber\\
z_n&=&y_n- \frac{f(y_n)}{f[x_n,y_n]}.{G(t_n)}\nonumber\\
x_{n+1}&=&z_n- \frac{f(z_n)}{f[y_n,z_n]}.{H(t_n)},
\end{eqnarray}
where $t_n=\frac{f(y_n)}{f(x_n)}$ and  $G(0)=G'(0)=1$, $ \left|G''(0)\right|< +\infty$; $H(0)=1, H'(0)=0,H''(0)=2$,  $ \left|H^{(3)}(0)\right|< +\infty$.\\                          
Now our aim is to develop efficient as well as  derivative-free version of the method $(\ref{eqn:11})$. For this we approx $f'(x_n)  \approx f[z_n,x_n]$, where $z_n$=$x_n+f(x_n)$ in $(\ref{eqn:11})$. Here we see that the method $(\ref{eqn:11})$ under this approximation of $f'(x_n)$ has fifth-order convergence and its error expression is given by 
\begin{eqnarray*}
e_{n+1}=\frac{ \{(1+c_1)^2 c_2^4 \} e_n^5 }{c_1^2} + O(e_n^6).
\end{eqnarray*}
Now to improve its order of convergence without using any new evaluation, we approx $f'(x_n) \approx f[z_n,x_n]$,  where $z_n=x_n+f(x_n)^2$, then its  error expression becomes 
\begin{eqnarray*}
e_{n+1}&=&\frac{c_2^2 \Bigl(2c_1^3c_2+2c_1c_3+c_2^2(-6+G''[0]) \Bigr) \Bigl(6c_1^3c_2+12c_1c_3+c_2^2(-24+H^{(3)}[0]) \Bigr) e_n^7}{12c_1^6}  \nonumber\\
&&+O(e_n^8).
\end{eqnarray*}
In fact, if we approx $f'(x_n) \approx f[z_n,x_n]$, where $z_n=x_n+ \alpha f(x_n)^n$ , $n \geq 2$, then its order of convergence remains seven.
Clearly here we use four function evaluations. So, according to Kung-Traub conjecture its maximum (optimal) possible order should be eight. 
 To do this, we consider the following iterative formula 
\begin{eqnarray}\label{eqn:21}
y_n&=&x_n- \frac{f(x_n)}{f[z_n,x_n]}\nonumber\\
w_n&=&y_n- G(t_1) .\frac{f(y_n)}{f[x_n,y_n]}\nonumber\\
x_{n+1}&=&w_n- H(t_2).\frac{f(w_n)}{f[w_n,y_n]},
\end{eqnarray}
where  $t_1=\frac{f(y_n)}{f(x_n)}$, $t_2=\frac{f[w_n,y_n]}{f[w_n,x_n]}$ and $z_n=x_n+f(x_n)^3$.                                                             The following theorem  shows that the conditions on weight functions under which proposed scheme has eighth-order convergence. 
\begin{thm}
Let us consider $\alpha \in D$ be a simple root of a sufficiently differentiable function $f: D \subseteq \mathbb{R} \rightarrow \mathbb{R}$.  If   $x_0$ is sufficiently close to the root $\alpha$. Then the method $(\ref{eqn:21})$ has eighth-order convergence, when the weight functions $G(t_1)$,   $H(t_2)$ satisfy the following conditions:
\begin{eqnarray}\label{eqn:22a}
&& G(0)=1,\  G^{'}(0)=1, \  \left|G^{(3)}(0)\right|< +\infty, \nonumber\\
&& H(1)=1,\  H^{'}(1)=0,\    H''(1)=2 \   , H^{(3)}(1)=-12, \left|H^{(4)}(1)\right|< +\infty.\nonumber\\
\end{eqnarray}
\end{thm}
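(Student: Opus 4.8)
The plan is to run the standard error-equation computation for this kind of three-substep, one-new-sample-per-substep iteration: expand every quantity in $(\ref{eqn:21})$ as a Taylor series in $e_n := x_n - \alpha$ and read off the leading term of $e_{n+1} := x_{n+1}-\alpha$. Put $c_k := f^{(k)}(\alpha)/(k!\,f'(\alpha))$, so that $f(x_n) = f'(\alpha)\bigl(e_n + c_2 e_n^2 + c_3 e_n^3 + \cdots\bigr)$. First I would note that, because $z_n - x_n = f(x_n)^3 = f'(\alpha)^3 e_n^3 + O(e_n^4)$, the divided difference obeys $f[z_n,x_n] = f'(x_n) + \tfrac12 f''(x_n)f(x_n)^3 + \cdots = f'(x_n) + O(e_n^3)$; feeding this into the first substep and expanding $f(x_n)/f[z_n,x_n]$ gives $y_n - \alpha = c_2 e_n^2 + (2c_3 - 2c_2^2)e_n^3 + O(e_n^4)$. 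The point of the cubic shift $z_n = x_n + f(x_n)^3$ is exactly this: the cheap Steffensen-type surrogate for $f'(x_n)$ perturbs $y_n-\alpha$ only from order $e_n^4$ on, so its first two coefficients coincide with those of the exact Newton iterate (the phenomenon already observed just before the theorem, and the reason the shift needs exponent $\ge 2$).

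Next I would propagate the expansion through the second substep. Expanding $f(y_n)$ and $f[x_n,y_n]$ about $\alpha$, setting $t_1 = f(y_n)/f(x_n) = c_2 e_n + O(e_n^2)$, and Taylor-expanding $G(t_1) = G(0) + G'(0)t_1 + \tfrac12 G''(0)t_1^2 + \tfrac16 G^{(3)}(0)t_1^3 + \cdots$, the conditions $G(0)=1$, $G'(0)=1$ are precisely what cancel the $e_n^2$- and $e_n^3$-contributions to $w_n - \alpha$, leaving $w_n - \alpha = A\,e_n^4 + B\,e_n^5 + O(e_n^6)$ with $A$ depending on $c_2,c_3,G''(0)$ and $B$ on $c_2,c_3,c_4,G''(0),G^{(3)}(0)$. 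One checks that derivatives of $G$ beyond $G^{(3)}(0)$ cancel out of every term that ultimately survives, which is the reason only $|G^{(3)}(0)|<\infty$ is imposed.

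For the third substep I would expand $f(w_n)$, the divided differences $f[w_n,y_n]$, $f[w_n,x_n]$, and their ratio $t_2$. Using $f[w_n,y_n]-f[w_n,x_n] = f[w_n,y_n,x_n](y_n-x_n)$ together with $y_n - x_n = -e_n + c_2 e_n^2 + O(e_n^3)$ yields $t_2 = 1 - c_2 e_n + (2c_2^2 - c_3)e_n^2 + O(e_n^3)$, so that, expanding $H$ about $1$,
\begin{equation*}
H(t_2) = H(1) + H'(1)(t_2-1) + \tfrac12 H''(1)(t_2-1)^2 + \tfrac16 H^{(3)}(1)(t_2-1)^3 + \tfrac1{24}H^{(4)}(1)(t_2-1)^4 + \cdots .
\end{equation*}
Since $f(w_n)/f[w_n,y_n] = (w_n-\alpha)\bigl(1 - c_2(y_n-\alpha) + O(e_n^4)\bigr) = O(e_n^4)$, substituting into $x_{n+1} = w_n - H(t_2)\,f(w_n)/f[w_n,y_n]$ produces a series in $e_n$ whose $e_n^4,\dots,e_n^7$ coefficients have to be annihilated: $H(1)=1$ kills $e_n^4$, $H'(1)=0$ prevents an $e_n^5$ term, $H''(1)=2$ kills $e_n^6$ (here $(t_2-1)^2 = c_2^2 e_n^2 + \cdots$), and $H^{(3)}(1)=-12$ kills $e_n^7$ (the relevant identity being that the $e_n^3$-coefficient $2c_3-2c_2^2$ of $y_n-\alpha$ and twice the $e_n^2$-coefficient $2c_2^2-c_3$ of $t_2-1$ add up to $2c_2^2$). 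Just as for $G$, $H^{(4)}(1)$ enters only the eighth-order constant while higher derivatives of $H$ drop out, so the computation terminates with $e_{n+1} = C\,e_n^8 + O(e_n^9)$ for an explicit $C = C\bigl(c_2,c_3,c_4,G''(0),G^{(3)}(0),H^{(4)}(1)\bigr)$, which is eighth-order convergence.

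The conceptual content is just that each prescribed derivative value of $G$ or $H$ is exactly the one that annihilates the next power of $e_n$; the only genuine obstacle is the algebra hiding behind ``exactly''. Establishing that $H''(1)=2$ and, in particular, $H^{(3)}(1)=-12$ are the \emph{universal}, $f$-independent values requires carrying the expansions of $t_2-1$, $w_n-\alpha$ and $y_n-\alpha$ through several orders each, hence expanding three divided differences and two function compositions to fairly high order, and then checking that the offending coefficients truly cancel. I would organize this bookkeeping with a computer algebra system, as is done in the references cited in the introduction.
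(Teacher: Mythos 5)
Your proposal is correct and takes essentially the same route as the paper: Taylor-expand all quantities about $\alpha$, push the expansions (via symbolic computation) through the three substeps, and verify that the stated conditions on $G$ and $H$ annihilate the $e_n^4$ through $e_n^7$ contributions, leaving $e_{n+1}=C\,e_n^8+O(e_n^9)$ with $C$ depending on $c_2,c_3,c_4,G''(0),H^{(4)}(1)$. One small aside needs fixing: for the first two coefficients of $y_n-\alpha$ to coincide with Newton's the shift exponent must be $\ge 3$ (as in the paper's Remark 2), not $\ge 2$, since $z_n=x_n+f(x_n)^2$ perturbs the $e_n^3$ coefficient and only yields seventh order.
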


\begin{proof}
With help of Taylor series and symbolic computation we find the error expression of method $(\ref{eqn:21})$. Furthermore,  by Taylor expansion around the simple root $\alpha$ in the $n^{th}$ iteration and by considering $e_n=x_n-\alpha$, $f(\alpha)=0$. We obtain
\begin{equation}\label{eqn:23a}
\begin{split}
f(x_n)=f'(\alpha)[c_1e_n+c_2e_n^2+c_3e_n^3+c_4e_n^4]+. . . +O(e_n^{10}),       
\end{split}
\end{equation}
and
\begin{equation}\label{eqn:24a}
\begin{split}
z_n= \alpha+e_n+f'(\alpha)^3\Bigl[e_n^3+3c_2e_n^4+(3c_2^2+3c_3)e_n^5 \Bigr]+. . . +O(e_n^{9})                     .  
\end{split}
\end{equation}
Subsequently, we obtain
\begin{equation}\label{eqn:25}
f(z_n)=f'(\alpha)\Bigl[e_n+c_2e_n^2+(f'(\alpha)^3+c_3)e_n^3 +(5 f'(\alpha)^3 c_2+c_4)e_n^4\Bigr]+...+O(e_n^9).
\end{equation}
 With the help of  equations (3.6)-(3.8), we obtain the Taylor's series expansion of $f[z_n,x_n]=\frac{f(z_n)-f(x_n)}{z_n-x_n}$ as follows:
\begin{eqnarray}\label{eqn:26a}
f[z_n,x_n]&=&f'(\alpha)+2f'(\alpha)c_2e_n+3f'(\alpha)c_3 e_n^2+(f'(\alpha)^4c_2+4f'(\alpha)c_4)e_n^3 \nonumber\\
&&+(3 f'(\alpha)^4(c_2^2+c_3)+5f'(\alpha)c_5)e_n^4)+. . . +O(e_n^{9}).
\end{eqnarray}
By putting the values of equations (3.6) and (3.9) in the first step of equation  $(\ref{eqn:21})$, we attain
\begin{equation}\label{eqn:27}
y_n=\alpha+c_2e_n^2-2(c_2^2-c_3)e_n^3+(f'(\alpha)^3c_2+4c_2^3-7c_2c_3+3c_4)e_n^4+. . . +O(e_n^{9}),
\end{equation}
On the other hand, we find
\begin{eqnarray}\label{eqn:27a}
f(y_n)&=&f'(\alpha)\Bigl[c_2e_n^2-2(c_2^2-c_3)e_n^3+(f'(\alpha)^3c_2+5c_2^3-7c_2c_3+3c_4)e_n^4\Bigr] \nonumber\\
&&+. . . +O(e_n^{9}).
\end{eqnarray}
Furthermore, we obtain
\begin{eqnarray}\label{eqn:2}
f[x_n,y_n]&=&f'(\alpha)+f'(\alpha)c_2e_n+f'(\alpha)(c_2^2+c_3)e_n^2+f'(\alpha)(-2c_2^3+3c_2c_3+c_4)e_n^3 \nonumber\\
&&+f'(\alpha)(4c_2^4+c_2^2(f'(\alpha)^3-8c_3)+2c_3^2+4c_2c_4+c_5)e_n^4 \nonumber\\
&&+...+O(e_n^{9}), 
\end{eqnarray}
and
\begin{eqnarray}\label{eqn:26b}
\frac{f(y_n)}{f[x_n,y_n]}&=& c_2 e_n^2+(-3c_2^2+2c_3)e_n^2+(7c_2^3+c_2(f'(\alpha)^3-10c_3)+3c_4)e_n^4 \nonumber\\
&&+. . . +O(e_n^{9}).
\end{eqnarray}
By using the equations $(\ref{eqn:26b})$, $(\ref{eqn:27a})$ and $(\ref{eqn:23a})$ in the second step of equation $(\ref{eqn:21})$, we attain
\begin{eqnarray}\label{eqn:28}
&w_n=&\alpha+(c_2-G(0)c_2)e_n^2+(-2c_2^2+3G(0)c_2^2+2c_3-2G(0)c_3-c_2^2G'(0))e_n^3\nonumber\\
&&+ . . .+O(e_n^{9}).
\end{eqnarray}
 By virtue of the above equation and considering $G(0)=1$,  $G'(0)= 1$, we acquire  
\begin{eqnarray}\label{eqn:29}
&&f(w_n)\nonumber\\
&&= \frac{1}{2} f'(\alpha) \Bigl(6c_2^3-2c_2c_3-c_2^3G''(0)\Bigr)e_n^4\nonumber\\
&&+\frac{1}{6}f'(\alpha)\Bigl(-6 f'(\alpha)^3c_2^2-108c_2^4+120c_2^2c_3-12c_3^2-12c_2c_4+27c_2^4G''(0) \nonumber\\
&&-18c_2^2c_3G''(0)-c_2^4G^{(3)}(0)\Bigr)e_n^5+ . . .+O(e_n^{9}).
\end{eqnarray}
With help of equations $(\ref{eqn:27a})$, $(\ref{eqn:28})$, $(\ref{eqn:29})$ and $(\ref{eqn:23a})$, we have
\begin{eqnarray}\label{eqn:30}
f[w_n,y_n]&=&f'(\alpha)+f'(\alpha) c_2^2 e_n^2+2 f'(\alpha)c_2(-c_2^2+c_3)e_n^3 \nonumber\\
&&+ \frac{1}{2} f'(\alpha) c_2(2c_2(f'(\alpha)^3-7c_3)+6c_4-c_2^3(-14+G''(0)))e_n^4 \nonumber\\
&&+ . . .+O(e_n^{9}),
\end{eqnarray}
and
\begin{eqnarray}\label{eqn:31}
f[w_n,x_n]&=&f'(\alpha)+f'(\alpha)c_2 e_n+f'(\alpha) c_3 e_n^2+ f'(\alpha) c_4 e_n^3 \nonumber\\
&&- \frac{1}{2} \Bigl(f'(\alpha) \{2c_2^2c_3-2c_5+c_2^4(-6+G''(0))\}\Bigr)e_n^4 \nonumber\\
&&+ . . .+O(e_n^{9}).
\end{eqnarray}
Now, putting the values of equations $(\ref{eqn:30})$, $(\ref{eqn:31})$ and $(\ref{eqn:29})$, in the last step of equation $(\ref{eqn:21})$, we find  
\begin{eqnarray*}
e_{n+1}&=&\frac{1}{2}(-1+H(1)) c_2 (2c_3+c_2^2(-6+G''(0)))e_n^4 \nonumber\\
&&+ \Bigl(2(-1+H(1))c_3^2+2(-1+H(1))c_2c_4+c_2^2\Bigl(f'(\alpha)^3(-1+H(1)) \nonumber\\
&&+c_3(20-20H(1)-H'(1)+3(-1+H(1))G''(0))\Bigr) \nonumber\\
&&+ \frac{1}{6}c_2^4(18(-6+6H(1)+H'(1))-3(-9+9H(1)+H'(1))G''(0) \nonumber\\
&&+(-1+H(1))G^{(3)}(0)\Bigr)e_n^5+ . . .+O(e_n^{9}).
\end{eqnarray*}
By putting $H(1)=1, \  H'(1)=0, \  H''(1)=2, \  H^{(3)}(1)=-12$, in the above equation the final error expression is given by 
%
%
%

\begin{eqnarray}\label{eqn:212}
e_{n+1}&=&\frac{1}{48}c_2(2c_3+c_2^2(-6+G''(0)))(24c_3^2-24c_2(f'(\alpha)c_2+4c_2^3+c_4) \nonumber\\
&&+c_2^4H^{(4)}(1))e_n^8+O(e_n^{9}). 
\end{eqnarray}

\end{proof}
\textbf{Particular Case:}\\\\
Let $ G(t_1)=\frac{1-2t_1}{1- 3t_1}$ and   $ H(t_2)=4-8t_2+7t_2^2-2t_2^3$,   
 then   the method  $(\ref{eqn:21})$ becomes 

\begin{eqnarray}\label{eqn:213}
y_n&=&x_n-  \frac{f(x_n)}{f[z_n,x_n]},  \  \  z_n=x_n+f(x_n)^3\nonumber\\
w_n&=&y_n- \Bigl(\frac{1-2t_1}{1-3t_1} \Bigr) \frac{f(y_n)}{f[x_n,y_n]}\nonumber\\
x_{n+1}&=&w_n-(4-8t_2+7t_2^2-2t_2^3).\frac{f(w_n)}{f[w_n,y_n]},
\end{eqnarray}
where  $t_1=\frac{f(y_n)}{f(x_n)}$,  $t_2=\frac{f[w_n,y_n]}{f[w_n,x_n]}$
and  its error expression becomes

\begin{eqnarray}\label{eqn:212}
e_{n+1}= -c_2c_3(-c_3^2+c_2(f'(\alpha)^3 c_2+4c_2^3+c_4)) e_n^8+O(e_n^{9}).
\end{eqnarray}
\textit{Remark.1:} By taking different values of $G(t_1)$ and $H(t_2)$ one may get a number of eight-order derivative-free iterative methods for finding the simple roots.\\
\textit{Remark.2:} 
In fact, if we put  $z_n=x_n+ \alpha. (f(x_n))^n$, $n \geq 3$ ,where $\alpha \neq 0 \in \mathbb{R}$ in the scheme $(\ref{eqn:21})$ then it gives the same order of convergence.\\
\textit{Remark.3:} In order of removing derivatives from iterative methods the number of function evaluation usually increase. But in our scheme the efficiency has increased without adding more function evaluations.

\section{Numerical results}   
The prime objective of this section is to demonstrate the performance of the new eighth-order derivative-free method.  In order to verify the effectiveness  of the proposed iterative method we have considered seven nonlinear test functions. The test non-linear functions and their roots are listed in Table-1. The entire computations reported here have been performed on the programming package $MATHEMATICA\  [8]$ using $1000$ digit floating point arithmetic using $''SetAccuraccy"$ command. 
 In Table 2 DIV. stands for divergent, NC and I stand for not convergent and indeterminate respectively. For comparing number of iterations and total number of function evaluations, we have used the following stopping criterion    
$|f(x_{n+1})|< 1.E-50.$
We have taken three different initial guesses for comparing the convergence rate of each scheme.
Here we compare the performance of the proposed method  $(\ref{eqn:213})$ ($OM8$)  with  the methods (2.13)  ($DF_{8,3}$),  (2.15)  ($DF_{8,4}$) of \cite{Soleymani7}; (4.17) ($ DF_{8,1}$), (4.19)  ($DF_{8,2}$) of \cite{Soleymani2} and (33)  ($DF_{8,5}$) and (35)   ($DF_{8,6}$) of \cite{Soleymani8} respectively.
 The results of comparison of the test functions are summarized in Table 2. From Table 2,  we observe that the new scheme is superior than some existing methods. 
\begin{table}[htbp]
\caption{Functions and their roots.}
\tiny
  \begin{tabular}{|lll|} \hline
$Nonlinear function\ f(x)$                                                        &$\alpha$             & \\ \hline 
$f_1(x)=10xe^{-x^2}-1$                      &$\alpha_1=1.6796...$  &    \\         
 

$f_2(x)=x^2e^x-Sinx$                      &$\alpha_2=0$              &  \\

$f_3(x)=Sin3x+xCosx$ & $\alpha_3=1.19$       &\\ 

$f_4(x)=Log(x)-x^3+2Sin(x)$ & $\alpha_4=1.2979$       &\\ 
$f_5(x)=Cos(x)+Sin(2x) \sqrt{1-x^2}+Sin(x^2)+x^{14}+x^3+\frac{1}{2x}$ & $\alpha_5=-0.92577$       &\\ 
$f_{6}(x)=e^{-x}+Sin(x)-1$ & $\alpha_{6}=2.07683$       &\\ 
$f_{7}(x)=(1+x^3)Cos(\frac{\pi}{2})+\sqrt{1-x^2}-\frac{2(9 \sqrt{2}+7\sqrt{3})}{27}$ & $\alpha_{7}=\frac{1}{3}$       &\\ 
\hline
 \end{tabular}
  \label{tab:abbr}
\end{table}

\begin{table}[!htbp]
\tiny
 \caption{ Comparison of different derivative-free methods.}
\begin{tabular}{|c |c| c|c|c | c|  c|  c |c |cc  |}\hline
$\left|f\right|$ & Guess &  &$DF_{8,1}$&      $DF_{8,2}$&           $DF_{8,3}$&        $ DF_{8,4}$&       $DF_{8,5}$&     $DF_{8,6}$&  $OM8$&\\ \hline
                         &           & IT&   2 &                      2 &                   2 &                  2 &              2 &                2 &        2 &  \\
                         &           & TNE&  8&                      8 &                  8 &                  8 &              8  &                8 &       8  &  \\
$\left|f_1\right|$ & 1.72&  &0.7e-53&      0.3e-53      &0.5e-58&        0.8e-63&      0.5e-61& 0.5e-61& 0.4e-80&\\ \hline
   &           & IT&          3 &         3              &           3             &        3                &        3      &        3         &   3     &  \\
                         &           & TNE&   12   &       12                &       12                 &     12   & 12    &    12  & 12        &  \\
$\left|f_1\right|$ & 1.5&  &0.4e-252&       0.1e-243          &0.2e-238&          0.3e-269&      0.3e-329& 0.3e-329& 0.6e-315&\\ \hline
                          &           & IT&   2 &                      2 &                   2 &                  2 &              2 &                2 &        2 &  \\
                         &           & TNE&  8&                      8 &                  8 &                  8 &              8  &                8 &       8  &  \\
$\left|f_1\right|$ & 1.7&  &0.8e-79&      0.5e-79 &                  0.9e-78&    0.3e-82&    0.9e-82&         0.9e-82&                    0.1e-100&\\ \hline

                         &           & IT&   3        &  3           &     3                   &   3          &         3          &    3               &   2     &  \\
                         &           & TNE& 12    &  12         &   12                     &   12     &   12                & 12                &   8      &  \\
$\left|f_2\right|$ &0.1&  &0.2e-107&  0.1e-114&          0.1e-214&          0.1e-238&    0.1e-231&  0.2e-231& 0.1e-52&\\ \hline
                         &           & IT&  3         &          3             &      3          &        3                &           2        &           2              &2         &  \\
                         &           & TNE&  12         &    12             &  12        &            12            &           8               &     8            &        8 &  \\
$\left|f_2\right|$ &-0.1&  & 0.4e-369&    0.1e-363&0.2e-362&        0.2e-384&             0.1e-52&  0.1e-52&    0.1e-74&\\ \hline
                         &           & IT&      3     &        3     &   3   &           3             &        3           &   3&  3       &  \\
                         &           & TNE&  12         &    12  &   12    &       12           &   12                &   12   &        12 &  \\
$\left|f_2\right|$ &-0.5&  & 0.1e-178&         0.1e-167&            0.5e-162&         0.8e-189&   0.2e-182&   0.2e-182&  0.4e-259&\\ \hline

                         &             & IT&    -   &     -&        - &                  -&        - &   -             & 2        &  \\
                         &           & TNE&  - &             -          &         -               &           -             &   -                &     -            &   8      &  \\
$\left|f_3\right|$ &1.0&      &     DIV.&             NC&                 NC&                 DIV.&             NC&         DIV.& 0.1e-58&\\ \hline

                         &             & IT&     4        &      4              &    -                    &    -                    &    4          &   4            & 3        &  \\
                         &           & TNE&  16         &   16          &   -                     & -                       &       16         &  16          & 12        &  \\
$\left|f_3\right|$ &0.8&      &   0.2e-139&   0.7e-194&                NC&                   NC&  0.3e-310&    0.4e-315&  0.1e-64&\\ \hline

                           &             & IT&       -    &         -    &        3                &       3                 &    3             &   3              & 3        &  \\
                           &           & TNE&      -     &        -   &     12                 &      12                 &    12           & 12                & 12        &  \\
$\left|f_3\right|$ &1.8&      &            DIV.&               DIV.&          0.5e-85&        0.2e-83&    0.1e-75&    0.3e-75&  0.1e-107&\\ \hline

     &             & IT&    3       & 3                      & 3           & 3              &   3         &3          & 3        &  \\
    &           & TNE& 12          &   12       &  12             & 12       & 12      & 12                &  12      &  \\
$\left|f_4\right|$ &1.4&      & 0.4e-214&     0.9e-211&         0.8e-237&           0.1e-253&       0.4e-334&      0.4e-334&  0.4e-333&\\ \hline

     &             & IT& -          &        -             &      -                 &       -                 &      -       &    -             &    3     &  \\
    &           & TNE&   -        &       -                &       -                 &     -        &    -               &      -   &    12     &  \\
$\left|f_4\right|$ &1.15&      &     I&                   I&                NC&                   NC&   NC&    NC&  0.2e-284&\\ \hline

     &             & IT&  2         &         2              &    2                   &      2                      &    2               & 2              & 2        &  \\
     &           & TNE& 8         &         8                    & 8                &     8                           & 8                & 8               & 8         &  \\
$\left|f_4\right|$ &1.3&      &0.3e-153&    0.3e-153&      0.1e-129&    0.5e-133&  0.2e-136&   0.2e-136&  0.1e-161&\\ \hline

    &             & IT&   3        &        3               &       2                 &   2                     &     2              &     2            &   2      &  \\
    &           & TNE&  12     &          12              &  8                      &  8                      &    8               & 8                & 8        &  \\
$\left|f_5\right|$ &-0.92&  &0.4e-295&     0.2e-301&    0.4e-57&      0.2e-74&    0.6e-59&    0.6e-59& 0.8e-97&\\ \hline

    &             & IT& 2          &        2               &       2                 &      2                  &   2                &     2            &     2    &  \\
    &           & TNE&   8        &        8               &         8               &       8                 &     8              &     8            &   8      &  \\
$\left|f_5\right|$ &-0.93& & 0.3e-61&  0.6e-61&     0.1e-65&  0.5e-70&  0.1e-70&  0.1e-70&  0.4e-98&\\ \hline

    &             & IT&  -         &        -               &             3           &      3                  &     3              &   3              &     3    &  \\
    &           & TNE& -        &     -                  &           12             &   12                  &  12               & 12               &12         &  \\
$\left|f_5\right|$ &-0.9&  &    I&     I&    0.1e-108&    0.2e-123& 0.8e-104&   0.1e-103& 0.2e-361&\\ \hline

    &             & IT&          3 &                     3  &                3        &    3                    &   3                &  3               &  2     &  \\
    &           & TNE&      12 &               12       &         12              &   12                   &12               &12            &8        &  \\
$\left|f_6\right|$ &1.9&  &0.4e-231&   0.6e-236&     0.5e-298&    0.2e-328&    0.1e-301&     0.4e-334&  0.3e-59&\\ \hline

    &             & IT&    3       &       3                &    3                    & 3                       &  2                 &     3            &    2     &  \\
   &           & TNE&   12     &     12                  & 12                 &12                        & 8                   &12                 &  8      &  \\
$\left|f_6\right|$ &2.3&  &   0.2e-354&   0.2e-351&      0.3e-336&     0.1e-357&   0.6e-54&   0.2e-301&  0.4e-60&\\ \hline

    &             & IT&    4       &      4      &   3                     &    3                    &     3              &    3             &    3     &  \\
    &           & TNE& 16      &    16       &   12                     &  12                      &   12                & 12      & 12       &  \\
$\left|f_6\right|$ &1.8&  & 0.1e-200& 0.1e-244&   0.8e-172&    0.6e-192& 0.9e-153&  0.2e-152& 0.7e-352&\\ \hline

   &             & IT&  3         &     3                  &     3                   &     3                   &      3             &  3               &  3       &  \\
   &           & TNE&12        &   12                    & 12                        &  12                      &    12               &12               & 12         &  \\
$\left|f_7\right|$ &0.8&  & 0.2e-57&   0.1e-55&    0.2e-72&  0.2e-82&   0.3e-98&   0.7e-202&  0.8e-219&\\ \hline

   &             & IT&  3             &   3                    &  3                     &     3                   &  3                &  3               &   3      &  \\
   &           & TNE& 12          &   12        & 12                       & 12                            & 12                 &  12               &   12      &  \\
$\left|f_7\right|$ &0.6&  & 0.1e-161&  0.2e-162&    0.1e-159&    0.1e-172&   0.6e-202&  0.2e-301&  0.2e-378&\\ \hline

   &             & IT&  2         &       2                &       2                 &       2                 &   2                &      2           &  2       &  \\
   &           & TNE&  8         &    8                   &     8                   &      8                  &  8                 &     8            & 8        &  \\
$\left|f_7\right|$ &0.4&  &0.5e-60&  0.6e-60& 0.1e-52&   0.7e-55& 0.2e-62& 0.2e-62& 0.1e-70&\\ \hline

 \end{tabular}
 \label{tab:abbr}
\end{table}

\newpage
\section{Concluding remarks}
In the present study, we have contributed a class  eighth-order Steffensen-type method. We have also described the dynamical behavior of some eight-order derivative-free methods. The proposed method is free from derivative. Our new scheme has maximum possible efficiency  index using four evaluations. Some numerical examples have been carried out to confirm the underlying theory of this study. From numerical  results one can observe that our contributed scheme is superior than some existing methods.

\textsc{Anuradha Singh\\
Department of Mathematics, \\
Maulana Azad National Institute of Technology,\\
Bhopal, M.P., India-462051}.\\
E-mail: {singhanuradha87@gmail.com; singh.anu3366@gmail.com}.\\\\
\textsc{Jai Prakash Jaiswal\\
Department of Mathematics, \\
Maulana Azad National Institute of Technology,\\
Bhopal, M.P., India-462051}.\\
E-mail: {asstprofjpmanit@gmail.com; jaiprakashjaiswal@manit.ac.in}.
\end{document}